\title[Weak Fano manifolds II]
{On images of weak Fano manifolds II}
\author{Osamu Fujino} 
\date{2012/1/5, version 1.20}
\subjclass[2010]{Primary 14J45; Secondary 14N30, 14E30}
\keywords{anti-canonical divisors, weak positivity}
\address{Department of Mathematics, Faculty of Science, 
Kyoto University, Kyoto 606-8502, Japan}
\email{fujino@math.kyoto-u.ac.jp}
\address{Graduate School of Mathematical Sciences, 
The University of Tokyo, 3-8-1 Komaba, 
Meguro, Tokyo, 153-8914 Japan.}
\email{gongyo@ms.u-tokyo.ac.jp}
\author{Yoshinori Gongyo} 
\newcommand{\Supp}[0]{{\operatorname{Supp}}}
\newtheorem{thm}{Theorem}[section]
\newtheorem{lem}[thm]{Lemma}
\newtheorem{cor}[thm]{Corollary}
\newtheorem*{claim}{Claim}
\newtheorem{conj}[thm]{Conjecture}
\theoremstyle{definition}
\newtheorem{rem}[thm]{Remark}
\newtheorem*{ack}{Acknowledgments}       
\newtheorem{say}[thm]{}
\begin{document}

\maketitle 

\begin{abstract}
We consider a smooth projective surjective morphism between smooth complex projective varieties. 
We give a Hodge theoretic proof of the following well-known fact:~If 
the anti-canonical divisor of the source space is nef, 
then so is the anti-canonical divisor of the target space. 
We do not use mod $p$ reduction arguments. 
In addition, we make some supplementary comments on our paper:~On images of weak Fano manifolds. 
\end{abstract} 

\tableofcontents
\section{Introduction}

We will work over $\mathbb C$, the complex number field. 
The following theorem is the main result of this paper. It is a generalization of 
\cite[Corollary 3.15 (a)]{deb}. 

\begin{thm}[Main theorem]\label{main-thm}
Let $f:X\to Y$ be a smooth projective surjective morphism between smooth projective varieties. 
Let $D$ be an effective $\mathbb Q$-divisor on $X$ such that 
$(X, D)$ is log canonical, $\Supp D$ is a simple normal crossing 
divisor, 
and $\Supp D$ is relatively normal crossing over $Y$. 
Let $\Delta$ be a {\em{(}}not necessarily effective{\em{)}} $\mathbb Q$-divisor on $Y$. 
Assume that $-(K_X+D)-f^*\Delta$ is nef. 
Then $-K_Y-\Delta$ is nef. 
\end{thm}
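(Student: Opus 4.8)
The plan is to reduce the statement to the case where $Y$ is a curve and then to invoke the weak positivity (semipositivity) of an appropriate direct image sheaf, which on a curve directly yields the desired degree inequality. Since nefness of $-K_Y-\Delta$ can be tested on curves, I would fix an irreducible curve $C\subset Y$ and let $\mu\colon C'\to C\hookrightarrow Y$ be the composition of the normalization with the inclusion, so that $C'$ is a smooth projective curve and $(-K_Y-\Delta)\cdot C=\deg\mu^*(-K_Y-\Delta)$. Forming the fiber product $X_{C'}:=X\times_Y C'$ with projections $f_{C'}\colon X_{C'}\to C'$ and $p\colon X_{C'}\to X$, the smoothness of $f$ guarantees that $f_{C'}$ is again smooth projective surjective, that $X_{C'}$ is smooth, and that $D_{C'}:=p^*D$ is effective with $(X_{C'},D_{C'})$ log canonical and $\Supp D_{C'}$ simple normal crossing and relatively normal crossing over $C'$. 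Using $K_{X_{C'}/C'}=p^*K_{X/Y}$ one computes $K_{X_{C'}}=p^*K_X+f_{C'}^*R$, where $R=K_{C'}-\mu^*K_Y\ge 0$ is the ramification divisor of $\mu$. Setting $\Delta_{C'}:=\mu^*\Delta-R$ then gives
\[
-(K_{X_{C'}}+D_{C'})-f_{C'}^*\Delta_{C'}=p^*\bigl(-(K_X+D)-f^*\Delta\bigr),
\]
which is nef. Note that $\Delta_{C'}$ is in general not effective; this is precisely why the theorem must allow non-effective boundaries. Granting the curve case, one obtains that $-K_{C'}-\Delta_{C'}=\mu^*(-K_Y-\Delta)$ is nef, i.e. $(-K_Y-\Delta)\cdot C\ge 0$, as required.

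It then remains to treat $f\colon X\to C$ with $C$ a smooth projective curve. I would set $A:=-(K_X+D)-f^*\Delta$, which is nef, and rewrite the hypothesis as $K_{X/C}+D+A=-f^*(K_C+\Delta)$. Choosing a positive integer $m$ with $m(K_{X/C}+D)$, $mA$, and $m(K_C+\Delta)$ all integral, and taking direct images via the projection formula together with $f_*\mathcal O_X=\mathcal O_C$ (as $f$ has connected fibers), gives
\[
f_*\mathcal O_X\bigl(m(K_{X/C}+D+A)\bigr)\cong\mathcal O_C\bigl(-m(K_C+\Delta)\bigr).
\]
On a smooth curve a line bundle is weakly positive if and only if it has nonnegative degree, so the curve case follows once the left-hand side is known to be weakly positive: this forces $\deg\bigl(-m(K_C+\Delta)\bigr)\ge 0$ and hence nefness of $-K_C-\Delta$.

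The main obstacle is therefore the weak positivity of the twisted direct image $f_*\mathcal O_X\bigl(m(K_{X/C}+D+A)\bigr)$, where $A$ is merely nef and $D$ is a log canonical $\mathbb Q$-boundary. This is the Hodge-theoretic heart of the argument and is where the smoothness of $f$ and the log canonical, relatively normal crossing conditions on $D$ are genuinely used. I would establish it by the standard covering technique: pass to a suitable finite cyclic cover to absorb the fractional part of $D$ and the nef class $A$ into the geometry, thereby reducing to the semipositivity of the bottom Hodge bundle of the resulting variation of Hodge structure (the Fujita--Kawamata--Koll\'ar type semipositivity theorem, in the refined form available for the relatively normal crossing situation). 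The delicate points are preserving the normal crossing hypotheses under base change and covering, and passing from the nef $\mathbb Q$-divisor $A$ to an honest semipositivity statement by an approximation argument.
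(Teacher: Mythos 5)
Your reduction to the case of a curve base is essentially the paper's first step: the authors also test nefness against a finite morphism $\pi:C\to Y$ from a smooth projective curve and pass to $Z=X\times_Y C$. (One small slip there: for $\dim Y\geq 2$ the divisor $R=K_{C'}-\mu^*K_Y$ is not a ramification divisor and need not be effective --- think of a rational curve on a variety with $K_Y$ ample --- but you never actually use $R\geq 0$, so the reduction itself is sound. You would also need a Stein factorization to guarantee $f_*\mathcal O_{X_{C'}}=\mathcal O_{C'}$, since base change can disconnect the fibers.)

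The genuine gap is the step you defer to ``the Hodge-theoretic heart.'' After the projection formula, $f_*\mathcal O_X\bigl(m(K_{X/C}+D+A)\bigr)\cong\mathcal O_C(-m(K_C+\Delta))$ is a line bundle whose nonnegativity of degree \emph{is} the conclusion of the theorem; so your reduction converts the statement into an exactly equivalent ``weak positivity'' assertion and all the content lies in the unproved claim. That claim is not an instance of the Viehweg--Campana weak positivity theorem: that theorem applies to $f_*\mathcal O_X(m(K_{X/C}+\Gamma))$ with $\Gamma$ an \emph{effective} snc log canonical boundary whose components dominate the base, and the class $A=-(K_X+D)-f^*\Delta$ is merely nef. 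The covering construction you propose cannot ``absorb $A$ into the geometry'': a cyclic cover requires a section of some multiple of the relevant line bundle, i.e.\ an effective member of $|mA|$, and a nef class need not admit any effective multiple. So the proposed mechanism fails at precisely the point where the hypotheses of the theorem are weakest.

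The paper's resolution of this difficulty is the part your sketch is missing. One perturbs: for a very ample $A$ on $X$ and $0<\delta\ll\varepsilon$, the class $-(K_X+D)-f^*\Delta+\delta A$ is ample, hence $\mathbb Q$-linearly equivalent to a general effective $F$ with $(Z,p^*D+p^*F)$ snc and lc; weak positivity then applies to
$$g_*\mathcal O_Z\bigl(m(K_{Z/C}+p^*D+p^*F)\bigr)\simeq g_*\mathcal O_Z(m\delta p^*A)\otimes\mathcal O_C\bigl(-m\pi^*(K_Y+\Delta)\bigr),$$
which is now a higher-rank sheaf. The unwanted factor $g_*\mathcal O_Z(m\delta p^*A)$ is cancelled by pairing with its dual via the trace map $S^n\mathcal E\otimes S^n(\mathcal E^*)\to\mathcal O_C$, using Lemma \ref{lem-cz} (a Grothendieck--Serre duality computation) to show that $(g_*\mathcal O_Z(m\delta p^*A))^*$ twisted by a controlled multiple of an ample divisor is globally generated. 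This produces a nontrivial map from a trivial bundle to $\mathcal O_C(-nm\pi^*(K_Y+\Delta)+2nm\varepsilon L)$, hence the degree bound up to an error $2\varepsilon L\cdot C$, which disappears as $\varepsilon\to 0$. Without some substitute for this perturbation-and-cancellation argument, your proof does not close.
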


By putting $D=0$ and $\Delta=0$ in Theorem \ref{main-thm}, we obtain the following corollary. 

\begin{cor}
Let $f:X\to Y$ be a smooth projective surjective morphism between smooth projective varieties. 
Assume that $-K_X$ is nef. 
Then $-K_Y$ is nef. 
\end{cor}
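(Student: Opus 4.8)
The plan is to reduce to the case where $Y$ is a smooth projective curve and then extract the required degree inequality from a Hodge-theoretic weak positivity statement. First I would use that nefness can be tested on curves: it suffices to show $(-K_Y-\Delta)\cdot C\geq 0$ for every irreducible curve $C\subseteq Y$. Let $\nu\colon \widetilde C\to C\hookrightarrow Y$ be the normalization and form the base change $f_{\widetilde C}\colon X_{\widetilde C}:=X\times_Y\widetilde C\to \widetilde C$. Since $f$ is smooth, $X_{\widetilde C}$ is again smooth and $f_{\widetilde C}$ is smooth projective surjective; because $\Supp D$ is relatively normal crossing over $Y$, the pullback $D_{\widetilde C}$ of $D$ keeps $(X_{\widetilde C},D_{\widetilde C})$ log canonical with $\Supp D_{\widetilde C}$ relatively normal crossing over $\widetilde C$, and the pullback of $-(K_X+D)-f^*\Delta$ remains nef. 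Thus every hypothesis is preserved, and proving the statement for $\widetilde C$ (with $\Delta$ replaced by $\nu^*\Delta$) gives $\deg(-K_{\widetilde C}-\nu^*\Delta)\geq 0$, which is exactly $(-K_Y-\Delta)\cdot C\geq 0$. Passing to the Stein factorization, whose first map is finite \'etale because $f$ is smooth, I may further assume $f$ has connected fibers, so that $f_*\mathcal{O}_X=\mathcal{O}_Y$. This reduces me to the case $Y$ a smooth projective curve, where I must prove $\deg(-K_Y-\Delta)\geq 0$.

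Next I would record the key identity. Writing $L:=-(K_X+D)-f^*\Delta$, which is nef by assumption, and using $K_X=K_{X/Y}+f^*K_Y$, a direct computation gives $K_{X/Y}+D+L=f^*(-K_Y-\Delta)$ as $\mathbb{Q}$-divisor classes. Choosing $m>0$ divisible enough that $mD$, $mL$, and $m\Delta$ are all integral, the projection formula together with $f_*\mathcal{O}_X=\mathcal{O}_Y$ yields an isomorphism $f_*\mathcal{O}_X(m(K_{X/Y}+D)+mL)\cong \mathcal{O}_Y(-m(K_Y+\Delta))$, a line bundle of degree $-m\deg(K_Y+\Delta)$ on the curve $Y$. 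Therefore it suffices to show that this direct image sheaf is nef, since a divisor on a smooth projective curve is nef precisely when its degree is nonnegative.

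The crux, and the step I expect to be the main obstacle, is the Hodge-theoretic input: the sheaf $f_*\mathcal{O}_X(m(K_{X/Y}+D)+mL)$ should be weakly positive, and over the curve $Y$ weak positivity of a locally free sheaf coincides with nefness. This is a \emph{twisted} weak positivity statement for the pluri-log-canonical sheaf of the log canonical pair $(X,D)$, twisted by the nef $\mathbb{Q}$-divisor $L$. The untwisted version is the Fujita--Kawamata--Viehweg semipositivity theorem, available in the log canonical generality through variations of (mixed) Hodge structure, so that no mod $p$ reduction is needed; the genuine difficulty is carrying the nef twist $mL$ through that Hodge-theoretic argument. I would handle this either by invoking a twisted weak positivity theorem directly or by absorbing $L$ into the boundary after a suitable ramified base change, so that the semipositivity for the enlarged pair applies. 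Granting this, $\mathcal{O}_Y(-m(K_Y+\Delta))$ is nef, whence $\deg(-K_Y-\Delta)\geq 0$, which completes the argument on curves and hence in general.
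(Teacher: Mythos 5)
Your reduction to curves, the Stein factorization step, and the identity $K_{X/Y}+D+L=f^*(-K_Y-\Delta)$ with $L:=-(K_X+D)-f^*\Delta$ all match the paper's strategy and are correct. But the step you flag as ``the crux'' is a genuine gap, not a routine citation, and neither of your two proposed fixes works when $L$ is merely nef. A twisted weak positivity theorem for $f_*\mathcal O_X(m(K_{X/Y}+D)+mL)$ with an arbitrary nef twist $mL$ is not available: the Viehweg--Campana statements require the twist to be (a multiple of) an effective divisor with controlled coefficients, and your alternative of ``absorbing $L$ into the boundary'' requires choosing an effective $\mathbb Q$-divisor $F\sim_{\mathbb Q}L$, which a nef class need not admit (a nef line bundle can have no sections in any multiple). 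This is exactly the subtlety the paper is organized around; note that in Remark 2.2 the authors point out that your argument \emph{does} work verbatim when $-(K_X+D)$ is semi-ample, precisely because then one can take $F\sim_{\mathbb Q}L$ effective and general.

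The paper's resolution, which is the actual content of the proof, is a perturbation-and-limit argument: fix $\varepsilon>0$, choose $0<\delta\ll\varepsilon$ and a very ample $A$ on $X$ so that $L+\delta A$ is ample and hence $\mathbb Q$-linearly equivalent to a general effective $F$; then weak positivity applies to $g_*\mathcal O_Z(m(K_{Z/C}+p^*D+p^*F))\simeq g_*\mathcal O_Z(m\delta p^*A)\otimes\mathcal O_C(-m\pi^*K_Y-m\pi^*\Delta)$. The price is that this is no longer a line bundle, so one cannot read off a degree directly; the paper removes the unwanted factor $g_*\mathcal O_Z(m\delta p^*A)$ by tensoring with its dual, globally generated after an $\mathcal O_C(nm\varepsilon L)$-twist by Lemma 2.1, and composing with the trace map $S^n\mathcal E\otimes S^n\mathcal E^*\to\mathcal O_C$ to produce a nonzero section of $\mathcal O_C(-nm\pi^*K_Y-nm\pi^*\Delta+2nm\varepsilon L)$, whence $(-\pi^*K_Y-\pi^*\Delta+2\varepsilon L)\cdot C\geq 0$; letting $\varepsilon\to 0$ finishes the proof. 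Your outline contains none of this machinery, so as written it establishes the corollary only under the stronger hypothesis that $-K_X$ is semi-ample.
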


By putting $D=0$ and assuming that $\Delta$ is a small ample $\mathbb Q$-divisor, 
we can recover \cite[Corollary 2.9]{kmm} by Theorem \ref{main-thm}. Note that 
Theorem \ref{main-thm} is also a generalization of \cite[Theorem 4.8]{fg}. 

\begin{cor}[{cf.~\cite[Corollary 2.9]{kmm}}] 
Let $f:X\to Y$ be a smooth projective surjective morphism between smooth projective varieties. 
Assume that $-K_X$ is ample. 
Then $-K_Y$ is ample. 
\end{cor}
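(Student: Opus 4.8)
The plan is to deduce this corollary directly from Theorem \ref{main-thm} by taking $D=0$ and choosing $\Delta$ to be a small ample $\mathbb{Q}$-divisor, and then upgrading the resulting nefness back to ampleness. First I would fix any ample $\mathbb{Q}$-divisor $A$ on $Y$ and set $D=0$ and $\Delta=\varepsilon A$ for a positive rational number $\varepsilon$ to be chosen small. With $D=0$ the hypotheses on the pair in Theorem \ref{main-thm} are vacuous: $(X,0)$ is log canonical because $X$ is smooth, and $\Supp 0=\emptyset$ is trivially a simple normal crossing divisor that is relatively normal crossing over $Y$.

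Next I would verify the nefness hypothesis, namely that $-(K_X+D)-f^*\Delta=-K_X-\varepsilon f^*A$ is nef. Since $A$ is ample, $f^*A$ is nef (the pullback of a nef class under a morphism is again nef), so it determines a single fixed class in $N^1(X)_{\mathbb{R}}$. Because $-K_X$ is ample, it lies in the interior of the nef cone; hence $-K_X-\varepsilon f^*A$ remains ample, and in particular nef, for all sufficiently small $\varepsilon>0$. Fixing such an $\varepsilon$, all the hypotheses of Theorem \ref{main-thm} are met.

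Theorem \ref{main-thm} then yields that $-K_Y-\Delta=-K_Y-\varepsilon A$ is nef. It remains only to observe that $-K_Y=(-K_Y-\varepsilon A)+\varepsilon A$ is the sum of a nef $\mathbb{Q}$-divisor and an ample $\mathbb{Q}$-divisor, and the sum of a nef divisor and an ample divisor is ample. Therefore $-K_Y$ is ample, as desired.

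Given Theorem \ref{main-thm}, the argument is essentially formal, so there is no genuine obstacle here: the entire geometric content already resides in Theorem \ref{main-thm}. The only points requiring any care are that the perturbation $\varepsilon f^*A$ can be absorbed using the openness of the ample cone in $N^1(X)_{\mathbb{R}}$, which is immediate since $f^*A$ is a single fixed class, and that the concluding ``nef plus ample is ample'' step respects $\mathbb{Q}$-coefficients, which it does.
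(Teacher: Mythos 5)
Your proposal is correct and is exactly the route the paper indicates: it deduces the corollary from Theorem \ref{main-thm} with $D=0$ and $\Delta=\varepsilon A$ a small ample $\mathbb{Q}$-divisor, then recovers ampleness of $-K_Y$ from nefness of $-K_Y-\varepsilon A$ via ``nef plus ample is ample.'' Your explicit verification that $-K_X-\varepsilon f^*A$ stays ample for small $\varepsilon$ (openness of the ample cone) just fills in the detail the paper leaves implicit.
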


Note that Conjecture 1.3 in \cite{fg} 
is still open. The reader can find some affirmative 
results on Conjecture \ref{conj13} in \cite[Section 4]{fg}. 

\begin{conj}[Semi-ampleness conjecture]\label{conj13} 
Let $f:X\to Y$ be a smooth projective surjective morphism between smooth projective varieties. 
Assume that $-K_X$ is semi-ample. 
Then $-K_Y$ is semi-ample. 
\end{conj}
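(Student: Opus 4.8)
The plan is to reduce the statement to an abundance-type question and then attempt to descend the semi-ample structure of $-K_X$ along $f$. As a first step, observe that semi-ampleness of $-K_X$ implies that $-K_X$ is nef, so Theorem \ref{main-thm} (with $D=0$ and $\Delta=0$) already yields that $-K_Y$ is nef. Hence the entire content of the conjecture is to upgrade this nefness to semi-ampleness, that is, to prove an abundance statement for the nef divisor $-K_Y$. This reformulation makes clear at the outset that the Hodge-theoretic methods used to prove Theorem \ref{main-thm}, which are tailored to produce nefness (a numerical condition), cannot by themselves settle the conjecture.

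For the main construction I would exploit the semi-ample fibration of $-K_X$. Choose $m$ sufficiently divisible so that $|-mK_X|$ is base point free, and let $g\colon X\to V$ be the induced morphism onto a normal projective variety $V$ with $-mK_X=g^*A$ for some ample divisor $A$ on $V$. Restricting to a fiber $F$ of $f$ gives $-mK_F=(-mK_X)|_F=(g|_F)^*A$, so each fiber carries its own semi-ample anti-canonical divisor. The goal is to produce a fibration structure on $Y$ compatible with $g$: I would form the image $W$ of $X$ under $(f,g)$ in $Y\times V$, with induced morphisms $p\colon W\to Y$ and $q\colon W\to V$, and try to show that $-K_Y$ is numerically the pullback via $p$ of the ample class $A$, modulo the relative contribution $K_{X/Y}$.

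The crucial technical input would again be positivity of direct images, but now used to control the \emph{fibration} rather than merely the numerical class. Since $f$ is smooth, one has $K_X=f^*K_Y+K_{X/Y}$, and I would study the sheaves $f_*\mathcal{O}_X(-mK_{X/Y})$ and their Hodge-theoretic semipositivity in order to descend the base point free linear system $|-mK_X|$ to a linear system on $Y$. The most favourable situation is when $-K_X$ is nef and big, where the analogous statement lies close to the results of \cite{fg}; I would try to bootstrap from the affirmative cases in \cite[Section 4]{fg} (for instance small $\dim Y$) by induction on $\dim Y$ or on the Iitaka dimension $\kappa(X,-K_X)$.

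The hard part, and the reason the conjecture remains open, is the extreme case $\kappa(X,-K_X)=0$. Here $-mK_X\cong\mathcal{O}_X$, so $K_X$ is torsion in $\mathrm{Pic}(X)$, and one must prove that $K_Y$ is likewise torsion, starting only from the nefness of $-K_Y$ supplied by Theorem \ref{main-thm}. Note that Theorem \ref{main-thm} gives no control on $K_Y$ itself, only on $-K_Y$, so it does not even force $-K_Y$ to be numerically trivial; and no purely Hodge-theoretic argument is known to force a nef $-K_Y$ of Iitaka dimension zero to be torsion. This is an instance of abundance for the anti-canonical divisor, which lies deeper than the semipositivity results available here. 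A plausible route around the obstruction would be to invoke the structure theory of varieties with nef anti-canonical divisor, such as the Albanese map and the maximal rationally connected fibration, to isolate the parts of $Y$ on which abundance is already known; but making such a decomposition interact correctly with $f$ appears to be the genuine difficulty.
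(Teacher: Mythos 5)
The statement you were asked to prove is Conjecture \ref{conj13}, which the paper explicitly declares to be open (``Note that Conjecture 1.3 in \cite{fg} is still open''); the paper contains no proof of it, only partial results in the direction of nefness. So there is no proof to compare yours against, and your proposal, to its credit, does not pretend to close the gap: it is a research plan that correctly reduces the conjecture to an abundance-type statement. Your opening reduction is sound and matches what the paper actually establishes: semi-ampleness of $-K_X$ gives nefness, Theorem \ref{main-thm} (or the simpler Theorem \ref{thm23}, which the paper proves via Viehweg's weak positivity of $f_*\mathcal O_X(K_{X/Y}-K_X)\simeq \mathcal O_Y(-K_Y)$) then gives that $-K_Y$ is nef, and everything the paper offers --- the weak positivity arguments, Kawamata's positivity theorem, and Berndtsson's analytic theorem cited in Section 2 --- produces only nefness or metric semipositivity, never base point freeness. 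Your diagnosis that these Hodge-theoretic tools are structurally incapable of upgrading nef to semi-ample is exactly the paper's own implicit stance, and your identification of the extreme case $\kappa(X,-K_X)=0$ (where $K_X$ is torsion and one must show $K_Y$ is torsion knowing only that $-K_Y$ is nef) as the core difficulty is a fair assessment of where the obstruction lies.

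That said, the constructive middle portion of your plan has a concrete failure point worth naming. You propose to form $W\subset Y\times V$ from the semi-ample fibration $g:X\to V$ of $-K_X$ and to show $-K_Y$ is numerically $p^*A$ ``modulo the relative contribution $K_{X/Y}$.'' But $-mK_X=f^*(-mK_Y)+(-mK_{X/Y})$, and the relative part $-K_{X/Y}$ restricts to $-K_F$ on each fiber $F$, which is in general a nontrivial semi-ample class; so $g$ separates points inside the fibers of $f$ and is in no way compatible with $f$. The projection $q:W\to V$ therefore induces no map from $Y$, and there is no mechanism by which sections of $-mK_X$ descend to sections of $-mK_Y$: the sheaf $f_*\mathcal O_X(-mK_X)=\mathcal O_Y(-mK_Y)\otimes f_*\mathcal O_X(-mK_{X/Y})$ has rank larger than one precisely because of the fiber directions, and weak positivity of $f_*\mathcal O_X(-mK_{X/Y})$ controls only numerical positivity, not generation. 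This is why even the ``favourable'' nef-and-big case you mention is not settled by the methods of \cite{fg}; the affirmative results in \cite[Section 4]{fg} rely on special hypotheses rather than on any such descent. In short: your proposal is an honest and largely accurate map of the open problem, but it contains no new step that survives scrutiny, and none should be expected, since the statement remains a conjecture in the paper itself.
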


In this paper, we give a proof of Theorem \ref{main-thm} 
without mod $p$ reduction arguments. 
Our proof is Hodge theoretic. 
We use a generalization of Viehweg's weak positivity theorem following \cite{cz}. 
In our previous paper \cite{fg}, we just use 
Kawamata's positivity theorem. 
We note that Theorem \ref{main-thm} is better than \cite[Theorem 4.1]{fg} 
(see Theorem \ref{thm23} below). 
We also note that Kawamata's positivity theorem (cf.~\cite[Theorem 2.2]{fg}) and 
Viehweg's weak positivity theorem (and its generalization 
in \cite[Theorem 4.13]{campana}) are obtained by Fujita--Kawamata's 
semi-positivity theorem, which follows from 
the theory of the variation of (mixed) Hodge structure. 
We recommend the readers to compare the proof of Theorem \ref{main-thm} 
with the arguments in \cite[Section 4]{fg}. 
By the Lefschetz principle, all the results in this paper 
hold over any algebraically closed field $k$ of characteristic zero. 
We do not discuss the case when the characteristic of the base field is positive. 

\begin{ack}
The first author was partially supported by the Grant-in-Aid for Young Scientists 
(A) $\sharp$20684001 from JSPS. 
The second author was partially supported by 
the Research Fellowships of the Japan Society for the Promotion of Science for Young Scientists.
The authors would like to thank Professor Sebastien Boucksom for informing them 
of Berndtsson's results \cite{bo}. 
They also would like to 
thank the Erwin Schr\"odinger International Institute for Mathematical 
Physics in Vienna for its hospitality.
\end{ack}

\section{Proof of the main theorem}
In this section, we prove Theorem \ref{main-thm}. 
We closely follow the arguments of \cite{cz}. 

\begin{lem}\label{lem-cz} 
Let $f:Z\to C$ be a projective surjective morphism from a $(d+1)$-dimensional 
smooth projective variety $Z$ to a smooth projective curve $C$. 
Let $B$ be an ample Cartier divisor on $Z$ such that $R^if_*\mathcal O_Z(kB)=0$ 
for every $i>0$ and $k\geq 1$. 
Let $H$ be a very ample Cartier divisor on $C$ such that 
$B^{d+1}<f^*(H-K_C)\cdot B^d$ and $B^{d+1}\leq f^*H\cdot B^d$. 
Then $$(f_*\mathcal O_Z(kB))^*\otimes \mathcal O_C(lH)$$ is 
generated by global sections for $l>k\geq 1$.  
\end{lem}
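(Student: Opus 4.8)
The plan is to reduce the global generation statement on the curve $C$ to a cohomology vanishing, and then transfer that vanishing up to $Z$, where the hypotheses on $B$ and $H$ can be exploited through intersection numbers. First I would record that $E_k:=f_*\mathcal{O}_Z(kB)$ is a vector bundle on $C$; on a smooth curve this is automatic once the sheaf is torsion free, but it also follows from the assumed vanishing $R^if_*\mathcal{O}_Z(kB)=0$ together with cohomology and base change. To prove that a vector bundle $V$ on the smooth projective curve $C$ is generated by global sections, it suffices to show that the evaluation map $H^0(C,V)\to V\otimes k(p)$ is surjective at every closed point $p\in C$; from the exact sequence $0\to V(-p)\to V\to V\otimes k(p)\to 0$ this reduces to proving $H^1(C,V(-p))=0$ for all $p$. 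I will apply this with $V=E_k^*\otimes\mathcal{O}_C(lH)$.

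Next, by Serre duality on $C$, the group $H^1(C,E_k^*\otimes\mathcal{O}_C(lH-p))$ is dual to $H^0(C,E_k\otimes\mathcal{O}_C(K_C-lH+p))$, so it is enough to prove that the latter vanishes for every $p$ and every $l>k\geq 1$. By the projection formula and the identity $H^0(C,f_*\mathcal{F})=H^0(Z,\mathcal{F})$, this group is identified with $H^0(Z,\mathcal{O}_Z(kB+f^*(K_C-lH+p)))$. Everything is thereby reduced to showing that the line bundle $L:=kB+f^*(K_C-lH+p)$ on $Z$ has no nonzero global section.

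Finally, I would establish $H^0(Z,L)=0$ by a positivity argument. If $L$ had a nonzero section, then $L$ would be linearly equivalent to an effective divisor, whence $L\cdot B^d\geq 0$ since $B$ is ample; so it suffices to prove $L\cdot B^d<0$. Expanding gives $L\cdot B^d=k\,B^{d+1}+f^*K_C\cdot B^d-l\,f^*H\cdot B^d+f^*p\cdot B^d$. Feeding in $l\geq k+1$, the hypotheses $B^{d+1}<f^*(H-K_C)\cdot B^d$ and $B^{d+1}\leq f^*H\cdot B^d$, and the numerical relation $f^*H\equiv(\deg H)\,f^*p$ with $\deg H\geq 1$ (as $H$ is very ample), I expect the estimate to collapse to $L\cdot B^d<f^*p\cdot B^d-f^*H\cdot B^d\leq 0$. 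The main obstacle is precisely this numerical bookkeeping: one must invoke $l>k$ at the right moment and use both inequalities on $B$ simultaneously, so that the fiber-degree contribution $f^*p\cdot B^d$ is dominated. The two carefully stated hypotheses on $H$ are exactly what force the strict inequality, and hence the vanishing.
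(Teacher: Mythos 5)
Your proof is correct, and the numerical estimate you leave as ``bookkeeping'' does go through: writing $L=kB+f^*(K_C-lH+p)$, the first hypothesis gives $L\cdot B^d<(k-1)\bigl(B^{d+1}-f^*H\cdot B^d\bigr)+\bigl(f^*p\cdot B^d-f^*H\cdot B^d\bigr)$ after using $l-1\geq k$ and $f^*H\cdot B^d>0$, and both parenthesized terms are $\leq 0$ by the second hypothesis and by $\deg H\geq 1$. Your route differs from the paper's in how the duality is executed. The paper first applies Grothendieck duality to identify $(f_*\mathcal O_Z(kB))^*$ with $R^df_*\mathcal O_Z(K_{Z/C}-kB)$ (this is where the hypothesis $R^if_*\mathcal O_Z(kB)=0$ is used, to collapse $Rf_*\mathcal O_Z(kB)$ to degree zero), then passes to $H^{d+1}$ on $Z$ via Leray and applies Serre duality on $Z$; it also proves global generation via the Castelnuovo--Mumford-type vanishing $H^1(C,V\otimes\mathcal O_C(-H))=0$ rather than the pointwise twist by $-p$. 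You instead apply Serre duality directly on the curve to the locally free sheaf $E_k^*\otimes\mathcal O_C(lH-p)$ and then descend to $H^0(Z,\cdot)$ by the projection formula and $H^0(C,f_*\mathcal F)=H^0(Z,\mathcal F)$, which is more elementary: it avoids Grothendieck duality and higher direct images altogether, and in fact never uses the assumption $R^if_*\mathcal O_Z(kB)=0$ (local freeness of $f_*\mathcal O_Z(kB)$ already follows from torsion-freeness on a smooth curve). Both arguments land on the same kind of vanishing, namely $H^0$ of a line bundle on $Z$ whose intersection with $B^d$ is negative, so the essential content is shared; your version simply shows the vanishing hypothesis on $R^if_*$ is not needed for this lemma.
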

\begin{proof}
By the Grothendieck duality 
$$
R\mathcal H om(Rf_*\mathcal O_Z(kB), \omega_C^\bullet)\simeq Rf_*R\mathcal H om 
(\mathcal O_Z(kB), \omega_Z^\bullet), 
$$ 
we obtain $$(f_*\mathcal O_Z(kB))^*\simeq R^df_*\mathcal O_Z(K_{Z/C}-kB)$$ for $k\geq 1$ and 
$$R^if_*\mathcal O_Z(K_{Z/C}-kB)=0$$ for $k\geq 1$ and $i\ne d$. 
We note that $f_*\mathcal O_Z(kB)$ is locally free and $(f_*\mathcal O_Z(kB))^*$ is its dual locally free 
sheaf. 
Therefore, 
we have 
\begin{align*}
&H^1(C, (f_*\mathcal O_Z(kB))^*\otimes \mathcal O_C((l-1)H))\\
&\simeq H^1(C, R^df_*\mathcal O_Z(K_{Z/C}-kB)\otimes \mathcal O_C((l-1)H))\\
&\simeq H^{d+1}(Z, \mathcal O_Z(K_Z-f^*K_C-kB+f^*(l-1)H))
\end{align*} 
for $k\geq 1$. 
By the Serre duality, 
$$
H^{d+1}(Z, \mathcal O_Z(K_Z-f^*K_C-kB+f^*(l-1)H))
$$
is dual to 
$$
H^0(Z, \mathcal O_Z(kB+f^*K_C-f^*(l-1)H)). 
$$ 
On the other hand, by the assumptions 
$$
(kB+f^*K_C-f^*(l-1)H)\cdot B^d<0
$$ 
if $l-1\geq k$. 
Thus, we obtain 
$$
H^0(Z, \mathcal O_Z(kB+f^*K_C-f^*(l-1)H))=0 
$$ 
for $l>k$. 
This means that 
$$
H^1(C, (f_*\mathcal O_Z(kB))^*\otimes \mathcal O_C((l-1)H))=0
$$ 
for $k\geq 1$ and $l>k$. 
Therefore, 
$(f_*\mathcal O_Z(kB))^*\otimes \mathcal O_C(lH)$ is generated by 
global sections for $k\geq 1$ and $l>k$. 
\end{proof}

Let us start the proof of Theorem \ref{main-thm}. 

\begin{proof}[Proof of {\em{Theorem \ref{main-thm}}}] 
We prove the following claim. 
\begin{claim}
Let $\pi:C\to Y$ be a projective morphism from a smooth projective curve $C$ and 
let $L$ be an ample Cartier divisor 
on $C$. 
Then $(-\pi^*K_Y-\pi^*\Delta+2\varepsilon L)\cdot C\geq 0$ for any positive rational number $\varepsilon$. 
\end{claim}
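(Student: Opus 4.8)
The plan is to deduce the nefness of $-K_Y-\Delta$ from the Claim by the usual reduction to curves, and to prove the Claim itself by base change followed by the positivity package assembled above. To prove $-K_Y-\Delta$ is nef it suffices to show $(-K_Y-\Delta)\cdot \pi_*[C]\ge 0$ for the normalization $\pi:C\to Y$ of an arbitrary integral curve in $Y$; since $(-K_Y-\Delta)\cdot\pi_*[C]=\deg_C(-\pi^*K_Y-\pi^*\Delta)$, the Claim for every $\varepsilon>0$ yields this after letting $\varepsilon\to 0$. So everything reduces to the displayed inequality.

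First I would base change. Pulling $f$ back along $\pi$ gives $Z:=X\times_Y C$ with projections $g:Z\to C$ and $h:Z\to X$. Because $f$ is smooth, $g$ is a smooth projective surjective morphism from the smooth $(d+1)$-fold $Z$ to the curve $C$, where $d=\dim X-\dim Y$, and $D_Z:=h^*D$ again makes $(Z,D_Z)$ log canonical with $\Supp D_Z$ relatively normal crossing over $C$. Writing $A:=-(K_X+D)-f^*\Delta$ for the nef class of the hypothesis, smoothness gives $K_{Z/C}=h^*K_{X/Y}$, and a short computation yields the key identity
\[
K_{Z/C}+D_Z+h^*A=g^*M,\qquad M:=-\pi^*(K_Y+\Delta).
\]
Hence $\deg_C M=(-K_Y-\Delta)\cdot\pi_*[C]$, and the Claim is exactly the assertion $\deg_C(M+2\varepsilon L)\ge 0$.

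The heart of the matter is to extract this degree bound from positivity. Since $h^*A$ is only nef, I would first perturb: fix an ample divisor on $Z$ and, at the cost of the term $2\varepsilon\, g^*L$, replace the nef class $h^*A$ by an ample one, which is precisely the role of the $\varepsilon$ in the statement. With an ample Cartier divisor $B$ on $Z$ built from a large multiple of $K_{Z/C}+D_Z+h^*A+2\varepsilon\, g^*L=g^*(M+2\varepsilon L)$ together with this ample perturbation, I would invoke Lemma~\ref{lem-cz}: Grothendieck--Serre duality identifies $(g_*\mathcal O_Z(kB))^*$ with a relative (log\nobreakdash-)canonical pushforward, and the Lemma shows $(g_*\mathcal O_Z(kB))^*\otimes\mathcal O_C(lH)$ is globally generated, hence nef, for $l>k\ge 1$. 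This controls the slope of $g_*\mathcal O_Z(kB)$ from above; combined with the generalized Viehweg weak positivity attached to $g$ (via the identity above the twisting class $h^*A+2\varepsilon\,g^*L$ is nef, so the relevant pushforward has nonnegative degree), the two bounds squeeze out $\deg_C(M+2\varepsilon L)\ge 0$. Letting $\varepsilon\to 0$ then proves the Claim, and with it the theorem.

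I expect the main obstacle to be the construction of the ample divisor $B$ so that all hypotheses of Lemma~\ref{lem-cz} hold simultaneously: the numerical inequalities $B^{d+1}<f^*(H-K_C)\cdot B^d$ and $B^{d+1}\le f^*H\cdot B^d$, together with the relative vanishing $R^ig_*\mathcal O_Z(kB)=0$ for $i>0$, must all be arranged while keeping $B$ numerically tied, up to the controlled $\varepsilon$-error, to the pulled-back class $g^*M+h^*A$. Tracking the perturbation $2\varepsilon L$ cleanly through these intersection-number inequalities, and ensuring that the duality identification of $(g_*\mathcal O_Z(kB))^*$ remains compatible with $D_Z$ in the log canonical (rather than merely smooth) setting, is where the genuine care is required.
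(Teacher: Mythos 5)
Your overall architecture --- base change to $Z=X\times_Y C$, an upper bound on $g_*\mathcal O_Z(kB)$ coming from Lemma \ref{lem-cz}, a lower bound coming from weak positivity, and a squeeze via the trace pairing --- is the same as the paper's, and your reduction of nefness to the Claim and the identity $K_{Z/C}+D_Z+h^*A=g^*M$ are correct. But there are two genuine gaps in the execution. First, your choice of $B$ is circular: you propose to build $B$ from ``a large multiple of $g^*(M+2\varepsilon L)$ together with an ample perturbation,'' but whether $\deg_C(M+2\varepsilon L)\geq 0$ is precisely what is to be proved, so for a large multiple $N$ the class $Ng^*(M+2\varepsilon L)+(\text{ample})$ need not be ample at all; moreover the numerical hypotheses of Lemma \ref{lem-cz} (namely $B^{d+1}<g^*(H-K_C)\cdot B^d$ and $B^{d+1}\leq g^*H\cdot B^d$) force $B$ to be \emph{small} relative to $H$, which a large multiple would destroy. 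The correct choice is the opposite: take a very ample $A$ on $X$, a rational $0<\delta\ll\varepsilon$, and set $B=m\delta\,p^*A$; then $B$ is the small ample perturbation itself, completely independent of $M$, the lemma applies with $lH$ a suitable multiple of $\varepsilon L$, and $M$ enters only through the isomorphism $g_*\mathcal O_Z(m(K_{Z/C}+p^*D+p^*F))\simeq g_*\mathcal O_Z(B)\otimes\mathcal O_C(mM)$.

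Second, ``the twisting class $h^*A+2\varepsilon\,g^*L$ is nef, so the relevant pushforward has nonnegative degree'' is not a form of any weak positivity theorem and skips the essential step: you must convert the perturbed, now ample, class $-(K_X+D)-f^*\Delta+\delta A$ into a \emph{general effective} $\mathbb Q$-divisor $F$, arrange that $(Z,p^*D+p^*F)$ is log canonical with simple normal crossing support whose components dominate $C$ and with coefficients of the right size, and only then apply Campana's generalization of Viehweg's theorem to $g_*\mathcal O_Z(m(K_{Z/C}+p^*D+p^*F))$, which is weakly positive over a nonempty open set (not merely ``of nonnegative degree''; it has higher rank in general). The squeeze is then concrete: for $n\gg 0$ both $S^n(g_*\mathcal O_Z(B)\otimes\mathcal O_C(mM))\otimes\mathcal O_C(nm\varepsilon L)$ and $S^n((g_*\mathcal O_Z(B))^*)\otimes\mathcal O_C(nm\varepsilon L)$ are generated by global sections over a nonempty open set, and the trace map $S^n(V)\otimes S^n(V^*)\to\mathcal O_C$ produces a nonzero map from a trivial sheaf to $\mathcal O_C(nm(M+2\varepsilon L))$, whence the degree bound. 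With these two repairs your outline becomes the paper's proof.
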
 
Let us start the proof of Claim. 
We fix an arbitrary positive rational number $\varepsilon$. 
We may assume that $\pi(C)$ is a curve, that is, 
$\pi$ is finite. 
We consider the following base change diagram 
$$
\begin{CD}
Z@>{p}>> X\\
@V{g}VV @VV{f}V\\
C@>>{\pi}> Y
\end{CD}
$$ 
where $Z=X\times _YC$. 
Then $g:Z\to C$ is smooth, $Z$ is smooth, $\Supp(p^*D)$ is relatively normal crossing over $C$, 
and $\Supp(p^*D)$ is a simple normal crossing divisor on $Z$. 
Let $A$ be a very ample Cartier divisor on $X$ and let $\delta$ 
be a small positive rational number such that $0<\delta\ll \varepsilon$. Since 
$-(K_X+D)-f^*\Delta+\delta A$ is ample, 
we can take a general effective $\mathbb Q$-divisor $F$ on $X$ such that 
$-(K_X+D)-f^*\Delta+\delta A\sim _{\mathbb Q}F$. 
Then we have 
$$
K_{X/Y}+D+F\sim _{\mathbb Q}\delta A-f^*K_Y-f^*\Delta. 
$$
By taking the base change, 
we obtain 
$$
K_{Z/C}+p^*D+p^*F\sim _{\mathbb Q} \delta p^*A-g^*\pi^*K_Y-g^*\pi^*\Delta. 
$$ 
Without loss of generality, 
we may assume that 
$\Supp (p^*D+p^*F)$ is a simple normal crossing divisor, 
$p^*D$ and $p^*F$ have no common irreducible components, and $(Z, p^*D+p^*F)$ is log canonical. 
Let $m$ be a sufficiently divisible positive integer such that 
$m\delta$ and $m\varepsilon$ are 
integers, $mp^*D$, $mp^*F$, and $m\Delta$ are Cartier divisors, and 
$$
m(K_{Z/C}+p^*D+p^*F)\sim m(\delta p^*A-g^*\pi^*K_Y-g^*\pi^*\Delta). 
$$ 
Note that $g:Z\to C$ is smooth, every irreducible component of $p^*D+p^*F$ is dominant onto $C$, and 
the coefficient of 
any irreducible component of $m(p^*D+p^*F)$ is a positive 
integer with $\leq m$. 
Therefore, we can apply the weak positivity theorem (cf.~\cite[Theorem 4.13]{campana}) and 
obtain that 
$$g_*\mathcal O_Z(m(K_{Z/C}+p^*D+p^*F))\simeq 
g_*\mathcal O_Z(m(\delta p^*A-g^*\pi^*K_Y-g^*\pi^*\Delta))$$ is 
weakly positive over some non-empty Zariski open set $U$ of 
$C$. For the basic properties of weakly positive sheaves, 
see, for example, \cite[Section 2.3]{viehweg}. 
Therefore, 
\begin{align*}
\mathcal E_1&:=
S^n(g_*\mathcal O_Z(m(\delta p^*A-g^*\pi^*K_Y-g^*\pi^*\Delta)))
\otimes \mathcal O_C(nm\varepsilon L)
\\
&\simeq S^n(g_*\mathcal O_Z(m\delta p^*A))\otimes 
\mathcal O_C(-nm\pi^*K_Y-nm\pi^*\Delta+nm\varepsilon L)
\end{align*} 
is generated by global sections over $U$ for every $n\gg 0$. On the other hand, by Lemma \ref{lem-cz}, 
if $m\delta\gg 0$, 
then we have that 
$$
\mathcal E_2:=\mathcal O_C(nm\varepsilon L)\otimes S^n((g_*\mathcal O_Z(m\delta p^*A))^*)
$$ 
is generated by global sections because $0<\delta\ll \varepsilon$ and $p^*A$ 
is ample on $Z$. 
We note that 
$$
\mathcal E_2\simeq S^n(\mathcal O_C(m\varepsilon L)\otimes (g_*\mathcal O_Z(m\delta p^*A))^*). 
$$ 
Thus there is a homomorphism 
$$
\alpha:\underset{\text{finite}}\bigoplus \mathcal O_C\to 
\mathcal E:=\mathcal E_1\otimes \mathcal E_2
$$ 
which is surjective over $U$. 
By using the non-trivial trace map 
$$
S^n(g_*\mathcal O_Z(m\delta p^*A))\otimes 
S^n((g_*\mathcal O_Z(m\delta p^*A))^*)\to \mathcal O_C, 
$$ 
we have a non-trivial homomorphism 
$$
\underset{\text{finite}}\bigoplus \mathcal O_C\overset{\alpha}{\longrightarrow} \mathcal E\overset{\beta}
{\longrightarrow} 
\mathcal O_C(-nm\pi^*K_Y-nm\pi^*\Delta+2nm\varepsilon L), 
$$ 
where $\beta$ is induced by the above trace map. 
We note that $g_*\mathcal O_Z(m\delta p^*A)$ is locally free and 
$$
S^n((g_*\mathcal O_Z(m\delta p^*A))^*)\simeq (S^n(g_*\mathcal O_Z(m\delta p^*A)))^*. 
$$ 
Thus we obtain $$(-nm\pi^*K_Y-nm\pi^*\Delta+2nm\varepsilon L)\cdot C=
nm(-\pi^*K_Y-\pi^*\Delta+2\varepsilon L)\cdot C\geq 0.$$ We finish the proof of Claim. 

Since $\varepsilon $ is an arbitrary small positive rational number, 
we obtain $\pi^*(-K_Y-\Delta)\cdot C \geq 0$. 
This means that $-K_Y-\Delta$ is nef on $Y$. 
\end{proof}

\begin{rem}
In Theorem \ref{main-thm}, 
if $-(K_X+D)$ is semi-ample, 
then we can simply prove that $-K_Y$ is nef as follows. 
First, by the Stein factorization, we may assume that $f$ has connected 
fibers (cf.~\cite[Lemma 2.4]{fg}). 
Next, in the proof of Theorem \ref{main-thm}, 
we can take $\delta=0$ and $\Delta=0$ when $-(K_X+D)$ is semi-ample. 
Then $$g_*\mathcal O_Z(m(K_{Z/C}+p^*D+p^*F))\simeq \mathcal O_C(-m\pi^*K_Y)$$ is weakly positive 
over some non-empty Zariski open set $U$ of $C$. 
This means that $-m\pi^*K_Y$ is pseudo-effective. 
Since $C$ is a smooth projective curve, $-\pi^*K_Y$ is nef. 
Therefore, $-K_Y$ is nef. In this case, we do not need Lemma \ref{lem-cz}. 
The proof given here is simpler than the 
proof of \cite[Theorem 4.1]{fg}. 
\end{rem}

We apologize the readers of \cite{fg} for misleading them on \cite[Theorem 4.1]{fg}. 
A Hodge theoretic proof of \cite[Theorem 4.1]{fg} 
is implicitly contained in Viehweg's theory of 
weak positivity (see, for example, \cite{viehweg}). Here we give a proof 
of \cite[Theorem 4.1]{fg} following Viehweg's arguments. 

\begin{thm}[{cf.~\cite[Theorem 4.1]{fg}}]\label{thm23} 
Let $f:X\to Y$ be a smooth projective surjective morphism between 
smooth projective varieties. If $-K_X$ is semi-ample, then $-K_Y$ is nef. 
\end{thm}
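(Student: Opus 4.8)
The plan is to run the weak-positivity argument from the proof of Theorem \ref{main-thm} in the special case $D=0$ and $\Delta=0$, exploiting that semi-ampleness of $-K_X$ lets me dispense with the ample perturbation $\delta A$ and hence with Lemma \ref{lem-cz}; in effect the statement is the $D=0$ instance of the remark above. As there, I would first reduce to the case where $f$ has connected fibers by replacing $f$ with the first part of its Stein factorization, which is legitimate by \cite[Lemma 2.4]{fg} since $f$ is smooth and so the induced finite cover of $Y$ is \'etale and preserves nefness. It then suffices to prove that $(-\pi^*K_Y)\cdot C\geq 0$ for every finite morphism $\pi:C\to Y$ from a smooth projective curve $C$, because a line bundle on a curve is nef exactly when it has nonnegative degree.

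Next I would produce the auxiliary divisor. Since $-K_X$ is semi-ample, $\mathcal O_X(-mK_X)$ is globally generated for some $m>0$, so by Bertini I may choose a general effective $\mathbb Q$-divisor $F\sim _{\mathbb Q}-K_X$ with $(X,F)$ log canonical, $\Supp F$ simple normal crossing, and $\Supp F$ relatively normal crossing over $Y$. Then $K_{X/Y}+F\sim _{\mathbb Q}-f^*K_Y$. Forming the base change $Z=X\times _YC$ exactly as in the proof of Theorem \ref{main-thm}, with $p:Z\to X$ and $g:Z\to C$, I obtain a smooth morphism $g$ together with
$$
m(K_{Z/C}+p^*F)\sim -mg^*\pi^*K_Y
$$
for suitably divisible $m$. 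Every component of $p^*F$ dominates $C$ and the coefficients of $m(p^*F)$ are positive integers $\leq m$, so the weak positivity theorem (cf.~\cite[Theorem 4.13]{campana}) applies and shows that $g_*\mathcal O_Z(m(K_{Z/C}+p^*F))$ is weakly positive over some non-empty Zariski open set $U\subseteq C$.

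I would then identify this sheaf. Since $\mathcal O_Z(m(K_{Z/C}+p^*F))\simeq g^*\mathcal O_C(-m\pi^*K_Y)$ and $g$ has connected fibers, the projection formula gives
$$
g_*\mathcal O_Z(m(K_{Z/C}+p^*F))\simeq \mathcal O_C(-m\pi^*K_Y).
$$
Thus the line bundle $\mathcal O_C(-m\pi^*K_Y)$ is weakly positive over $U$, hence pseudo-effective; on the smooth projective curve $C$ this forces $\deg(-m\pi^*K_Y)\geq 0$, so $-\pi^*K_Y$ is nef. As $\pi$ was arbitrary, $-K_Y$ is nef.

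The main obstacle I anticipate lies not in the formal structure, which closely mirrors the proof of Theorem \ref{main-thm}, but in two verifications. First, I must check that after the base change the hypotheses of the weak positivity theorem genuinely hold, namely smoothness of $g$, dominance of each component of $p^*F$ over $C$, and the coefficient bound; this should follow from the general-position choice of $F$ together with the smoothness of $f$. Second, and more conceptually, there is the passage from weak positivity of a line bundle on a curve to nonnegativity of its degree: here global generation over the non-empty set $U$ of $\mathcal O_C(-m\pi^*K_Y)^{\otimes a}\otimes\mathcal O_C(bH)$ yields a nonzero section, so $a\deg(-m\pi^*K_Y)+b\deg H\geq 0$, and letting the ample twist go to zero relative to the symmetric power $a$ gives $\deg(-m\pi^*K_Y)\geq 0$. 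This last step is the crux, and it is precisely where the definition of weak positivity is exploited.
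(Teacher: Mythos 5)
Your proof is correct, but it is not the route the paper takes for this theorem: it is essentially (the $D=0$ case of) the argument sketched in the Remark following the proof of Theorem \ref{main-thm}. The paper's own proof of Theorem \ref{thm23} avoids curves and base change entirely: after the same Stein-factorization reduction, it invokes Viehweg's result (cf.~\cite[Proposition 2.43]{viehweg}) that for a smooth morphism $f$ with $-K_X$ semi-ample the sheaf $f_*\mathcal O_X(K_{X/Y}-K_X)\simeq \mathcal O_Y(-K_Y)$ is locally free and weakly positive over all of $Y$, and then uses the equivalence, for locally free sheaves, of nefness with weak positivity over the whole of $Y$ (cf.~\cite[Proposition 2.9 e)]{viehweg}). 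That argument is three lines and needs no auxiliary divisor $F$, no restriction to curves, and no passage from weak positivity over an open set to degree bounds. What your route buys in exchange is that it reuses exactly the machinery already set up for Theorem \ref{main-thm} (Campana's weak positivity theorem over a nonempty open subset of the base curve after base change), so it only needs the weaker input of weak positivity over some $U\subseteq C$ and it generalizes verbatim to the case where $-(K_X+D)$ is semi-ample, which is the point of the Remark. The two verifications you flag are the right ones and both go through: a general member of the free linear system $|-mK_X|$ contains no irreducible component of any fiber of $f$, so every component of $p^*F$ dominates $C$ and the coefficient bound holds by construction; and weak positivity of a line bundle over a nonempty open subset of a smooth projective curve forces nonnegative degree exactly by the limiting argument you describe, since $ab\deg L+b\deg H\geq 0$ for all $a$ gives $\deg L\geq -\deg H/a\to 0$.
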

\begin{proof}
By the Stein factorization, we may assume that $f$ has connected fibers (cf.~\cite[Lemma 2.4]{fg}). 
Note that a locally free sheaf $\mathcal E$ on $Y$ is nef, equivalently, semi-positive in the sense of 
Fujita--Kawamata, if and only if $\mathcal E$ is weakly positive over $Y$ (see, for example, \cite[Proposition 
2.9 e)]{viehweg}). Since $f$ is smooth and $-K_X$ is semi-ample, 
$f_*\mathcal O_X(K_{X/Y}-K_X)$ is locally free and weakly positive over $Y$ 
(cf.~\cite[Proposition 2.43]{viehweg}). 
Therefore, 
we obtain that $\mathcal O_Y(-K_Y)\simeq f_*\mathcal O_X(K_{X/Y}-K_X)$ is nef. 
\end{proof}

Note that our Hodge theoretic proof of \cite[Theorem 4.1]{fg}, which depends on 
Kawamata's positivity theorem, is different from the 
proof given above and plays important roles in \cite[Remark 4.2]{fg}, which is related to Conjecture 
\ref{conj13}. 

\begin{say}[Analytic method]
Sebastien Boucksom pointed out that the following theorem, which is 
a special case of \cite[Theorem 1.2]{bo}, implies \cite[Theorem 4.1]{fg} and \cite[Corollary 2.9]{kmm}. 

\begin{thm}[{cf.~\cite[Theorem 1.2]{bo}}]
Let $f:X\to Y$ be a proper smooth morphism from a 
compact K\"ahler manifold $X$ to a compact complex manifold $Y$. 
If $-K_X$ is semi-positive {\em{(}}resp.~positive{\em{)}}, 
then $-K_Y$ is semi-positive {\em{(}}resp.~positive{\em{)}}. 
\end{thm}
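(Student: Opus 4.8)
The plan is to exhibit $-K_Y$ as a direct image line bundle to which Berndtsson's positivity theorem applies, and then to invoke \cite[Theorem 1.2]{bo}.

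First I would reduce to the case of connected fibers by the Stein factorization, so that $f_*\mathcal O_X\simeq \mathcal O_Y$. Set $L:=\mathcal O_X(-K_X)$ and, using the hypothesis, fix a smooth Hermitian metric $h$ on $L$ whose Chern curvature is semi-positive (resp.~positive); write $d=\dim X-\dim Y$ for the fiber dimension. Since
$$
K_{X/Y}+L=(K_X-f^*K_Y)-K_X=f^*(-K_Y),
$$
the projection formula together with $f_*\mathcal O_X\simeq \mathcal O_Y$ gives
$$
f_*\mathcal O_X(K_{X/Y}+L)\simeq \mathcal O_Y(-K_Y).
$$
The normal bundle of a fiber $X_y$ in $X$ is the restriction of $f^*T_Y$ and hence trivial, so $K_X|_{X_y}\simeq K_{X_y}$ and $(K_{X/Y}+L)|_{X_y}\simeq \mathcal O_{X_y}$; therefore $H^0(X_y,(K_{X/Y}+L)|_{X_y})\simeq \mathbb C$ for every $y$, which confirms that this direct image is a line bundle of rank one.

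Next I would apply Berndtsson's theorem. Since $X$ is compact K\"ahler and $h$ has semi-positive (resp.~positive) curvature, \cite[Theorem 1.2]{bo} shows that $f_*\mathcal O_X(K_{X/Y}+L)$, endowed with its natural $L^2$-metric, is Nakano semi-positive (resp.~Nakano positive). Explicitly, a local holomorphic section of $\mathcal O_Y(-K_Y)$ pulls back to a holomorphic $L$-valued relative $d$-form whose fiberwise $L^2$-norm, formed using $h$, defines a smooth Hermitian metric on $-K_Y$. A rank-one sheaf is Nakano positive, respectively Nakano semi-positive, exactly when the Chern curvature of its metric is positive, respectively semi-positive; so one concludes that $-K_Y$ is positive, respectively semi-positive.

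The analytic heart of the matter is Berndtsson's curvature formula for $L^2$-metrics on direct images, which I would simply cite; the remaining identifications are routine. The step that I expect to require the most care is the strict (positive) case: there one must ensure that strict positivity of the curvature of $L$ survives both the fiber integration and the Kodaira--Spencer contribution to the curvature of the $L^2$-metric. This is precisely the strict positivity assertion contained in \cite[Theorem 1.2]{bo}, so in the end it too is subsumed by the cited result.
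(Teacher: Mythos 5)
Your proposal is correct and takes essentially the same route as the paper, which gives no independent argument but simply observes that the statement is a special case of \cite[Theorem 1.2]{bo} and refers to \cite{bo} for details. Your reduction --- Stein factorization to get connected fibers, the identification $f_*\mathcal O_X(K_{X/Y}-K_X)\simeq \mathcal O_Y(-K_Y)$, and the application of Berndtsson's Nakano (semi)positivity of the $L^2$-metric on the direct image --- is exactly the intended deduction.
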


The proof of \cite[Theorem 1.2]{bo} is analytic  and does not use mod $p$ reduction arguments. 
For the details, see \cite{bo}. 
\end{say}

\begin{say}[Varieties of Fano type] 
Let $X$ be a normal projective variety. 
If there is an effective $\mathbb Q$-divisor on $X$ such that 
$(X, \Delta)$ is klt and that $-(K_X+\Delta)$ is ample, then 
$X$ is said to be {\em{of Fano type}}. 

In \cite[Theorem 2.9]{ps} and \cite[Corollary 3.3]{fg}, 
the following statement was proved. 

Let $f:X\to Y$ be a proper surjective morphism between normal projective 
varieties with connected fibers. 
If $X$ is of Fano type, 
then so is $Y$. 

It is indispensable for the proof of the main theorem in 
\cite{fg} (cf.~\cite[Theorem 1.1]{fg}). 
The proofs in \cite{ps} and \cite{fg} need the theory of the variation of 
Hodge structure. 
It is because we use Ambro's canonical bundle formula 
or Kawamata's positivity theorem. 
In \cite{gost}, Okawa, Sannai, Takagi, and the second author 
give a new proof of the above result without using the theory of the variation of Hodge structure. It 
deeply depends on the minimal model theory and the theory of 
$F$-singularities. 
\end{say}

We close this paper with a remark on \cite{deb}. 
By modifying the proof of Theorem \ref{main-thm} suitably, 
we can generalize \cite[Corollary 3.14]{deb} without any difficulties. 
We leave the details as an exercise for the 
readers. 

\begin{cor}[{cf.~\cite[Corollary 3.14]{deb}}] 
Let $f:X\to Y$ be a projective surjective morphism from a smooth 
projective 
variety $X$ such that 
$Y$ is smooth in codimension one. 
Let $D$ be an effective $\mathbb Q$-divisor 
on $X$ such that $\Supp D^{\mathrm{hor}}$, 
where $D^{\mathrm{hor}}$ is the horizontal part of $D$, is a simple normal crossing divisor 
on $X$ and that $(X, D)$ is log canonical over the generic point of $Y$. 
Let $\Delta$ be a {\em{not necessarily effective}} 
$\mathbb Q$-Cartier $\mathbb Q$-divisor on $Y$. 
\begin{itemize}
\item[(a)] If $-(K_X+D)-f^*\Delta$ is nef, then 
$-K_Y-\Delta$ is generically nef. 
\item[(b)] If $-(K_X+D)-f^*\Delta$ is ample, then 
$-K_Y-\Delta$ is generically ample.  
\end{itemize} 
\end{cor}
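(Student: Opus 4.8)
The plan is to reduce first to part (a) and then to mimic the proof of Theorem \ref{main-thm} over a general complete intersection curve. I would dispose of (b) at once: if $-(K_X+D)-f^*\Delta$ is ample and $H$ is a very ample divisor on $Y$, then $-(K_X+D)-f^*(\Delta+\varepsilon H)$ is still ample, hence nef, for all sufficiently small rational $\varepsilon>0$. Applying (a) to the boundary $\Delta+\varepsilon H$ shows that $-K_Y-(\Delta+\varepsilon H)$ is generically nef, so on a general complete intersection curve $C$ one gets $(-K_Y-\Delta)\cdot C\geq \varepsilon H\cdot C>0$; this is exactly the statement that $-K_Y-\Delta$ is generically ample. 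Thus it suffices to prove (a).

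For (a), recall that $-K_Y-\Delta$ being generically nef means $(-K_Y-\Delta)\cdot C\geq 0$ for a general complete intersection curve $C=H_1\cap\cdots\cap H_{n-1}$, where $n=\dim Y$, $H$ is a fixed very ample divisor, $H_i\in|m_iH|$ are general, and $m_i\gg 0$. Since $Y$ is smooth in codimension one, its singular locus has codimension $\geq 2$, so by Bertini a general such $C$ is a smooth projective curve contained in the smooth locus $Y_{\mathrm{sm}}$. In particular $K_Y$ is Cartier in a neighbourhood of $C$, so that $\pi^*K_Y$ and $\pi^*\Delta$ are well defined for the inclusion $\pi:C\hookrightarrow Y$. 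This is precisely where the hypothesis that $Y$ be smooth in codimension one enters.

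Next I would set up the base change exactly as in the proof of Theorem \ref{main-thm}, the sole difference being that $f$ is no longer smooth. By generic smoothness $f$ is smooth over a dense Zariski open set $U\subseteq Y$, and $(X,D)$ is log canonical over the generic point of $Y$ with $\Supp D^{\mathrm{hor}}$ simple normal crossing; shrinking $U$ I may assume that $f$ is smooth over $U$, that $(X,D)$ is log canonical over $U$, and that $D^{\mathrm{hor}}$ is relatively normal crossing over $U$. A general $C$ as above meets $U$ in a dense open subset $C\cap U$, so after forming $Z=X\times_Y C$ and taking a resolution $Z'\to Z$ I obtain a smooth projective $Z'$ with a projective morphism $g':Z'\to C$ that is smooth over $C\cap U$ and whose horizontal boundary has simple normal crossing support there. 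Choosing $F$ general with $-(K_X+D)-f^*\Delta+\delta A\sim_{\mathbb{Q}}F$, so that $(X,D+F)$ is log canonical with simple normal crossing horizontal support over the generic point of $Y$, the linear equivalence $K_{Z'/C}+p^*D+p^*F\sim_{\mathbb{Q}}\delta p^*A-(g')^*\pi^*K_Y-(g')^*\pi^*\Delta$ holds over $C\cap U$. Campana's weak positivity theorem \cite[Theorem 4.13]{campana} then yields weak positivity of $g'_*\mathcal{O}_{Z'}(m(K_{Z'/C}+p^*D+p^*F))$ over a non-empty open subset of $C$. From here the argument of Theorem \ref{main-thm} runs verbatim: Lemma \ref{lem-cz} controls the dual sheaf, the trace map produces a non-trivial homomorphism into $\mathcal{O}_C(-nm\pi^*K_Y-nm\pi^*\Delta+2nm\varepsilon L)$, and letting $\varepsilon\to 0$ gives $(-K_Y-\Delta)\cdot C\geq 0$.

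The main obstacle is verifying that Campana's weak positivity theorem still applies after the base change and resolution, now that neither $f$ is smooth nor $D$ is globally simple normal crossing. Concretely, I must check that the boundary coefficients on $Z'$ remain in the log canonical range and that every component relevant to the positivity statement is dominant over $C$, while the vertical components of $p^*D$ and $p^*F$ and the exceptional divisors of $Z'\to Z$, all of which are supported over the finite set $C\setminus U$, do not interfere. Since the weak positivity conclusion is needed only over an open subset of $C$, and since the hypotheses of the corollary are exactly the conditions that survive restriction to the generic point of a general $C$, I expect this verification to go through, leaving the remainder of the argument identical to that of Theorem \ref{main-thm}.
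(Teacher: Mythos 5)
First, note that the paper itself gives no written proof of this corollary: it is left as an exercise with the instruction ``modify the proof of Theorem \ref{main-thm} suitably,'' so your proposal is being measured against that intended modification. Your reduction of (b) to (a) by perturbing $\Delta$ to $\Delta+\varepsilon H$ is correct, and your overall strategy for (a) --- test $-K_Y-\Delta$ against a general complete intersection curve $C$, use ``smooth in codimension one'' to place $C$ inside $Y_{\mathrm{sm}}$ so that $\pi^*K_Y$ makes sense, and then rerun the weak positivity argument over $C$ --- is the right one.

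The gap is exactly at the point you flag, and as you have set it up the verification would \emph{not} go through. If you pass to a resolution $Z'\to Z=X\times_Y C$, then $K_{Z'/C}$ differs from $p^*K_X-(g')^*\pi^*K_Y$ by a vertical divisor $E$ supported over the finite set $C\setminus U$, coming from the discrepancies of the resolution and from the failure of the smooth base change formula over the singular fibres of $f$; this $E$ has no a priori sign. Campana's theorem gives weak positivity of $g'_*\mathcal O_{Z'}(m(K_{Z'/C}+(p^*(D+F))^{\mathrm{hor}}))$ over a nonempty open set, but to feed this into Lemma \ref{lem-cz} and the trace map argument you need a homomorphism from that sheaf to $g'_*\mathcal O_{Z'}(m\delta p^*A)\otimes\mathcal O_C(-m\pi^*(K_Y+\Delta))$ which is surjective over a nonempty open set, i.e., you need $E-(p^*(D+F))^{\mathrm{vert}}\leq 0$; weak positivity is \emph{not} preserved under modifications at finitely many points of $C$ in the wrong direction (compare $\mathcal O_C$ with $\mathcal O_C(kP)$). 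If all you know is $E\leq (g')^*G$ for some effective $G$ on $C$, the final inequality degrades to $(-K_Y-\Delta)\cdot C\geq -2\varepsilon L\cdot C-\deg G$, and $\deg G$ does not vanish as $\varepsilon\to 0$. The missing idea that makes everything clean: for a general complete intersection curve $C=H_1\cap\dots\cap H_{n-1}$ with $H_i\in |m_iH|$ general, the preimage $Z:=f^{-1}(C)$ is \emph{already smooth} by Bertini applied to the base point free systems $|f^*(m_iH)|$, so no resolution is needed, and adjunction for complete intersections gives $K_{Z/C}=p^*K_X-g^*\pi^*K_Y$ exactly (both $K_Z$ and $K_C$ acquire the same $\sum m_iH$ correction). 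Then
$$K_{Z/C}+(p^*(D+F))^{\mathrm{hor}}\sim_{\mathbb Q}\delta p^*A-g^*\pi^*(K_Y+\Delta)-(p^*(D+F))^{\mathrm{vert}}$$
with $(p^*(D+F))^{\mathrm{vert}}\geq 0$, so the pushforward of the left-hand side maps into $g_*\mathcal O_Z(m\delta p^*A)\otimes\mathcal O_C(-m\pi^*(K_Y+\Delta))$, isomorphically over a nonempty open subset of $C$; weak positivity over an open set passes along such a map, and the rest of the proof of Theorem \ref{main-thm} applies verbatim. With that replacement your outline becomes a complete proof.
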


\end{document}